\newcommand{\Ndash}{\nobreakdash--}
\newtheorem{theorem}{Theorem}
\newtheorem{lemma}{Lemma}[section]
\newtheorem{proposition}[lemma]{Proposition}
\newtheorem{corollary}[lemma]{Corollary}
\newtheorem{theo}[lemma]{Theorem}
\numberwithin{equation}{section}
\title{Oscillation of H\"older Continuous Functions}
\author{ Jos\'e Gonz\'alez Llorente and Artur Nicolau}
\date{}
\begin{document}

\maketitle

\footnotetext{Both authors are partially supported by the grants
MTM2011-24606 and 2009SGR420.}

\begin{abstract}
Local oscillation of a function satisfying a H\"older condition is
considered and it is proved  that its growth is governed by a
version of the Law of the Iterated Logarithm.

\end{abstract}

\section{Introduction}\label{section1}

For $0<\alpha<1$, let $\Lambda_{\alpha}(\mathbb{R})$ be the class of
functions $f\colon \mathbb{R}\to \mathbb{R}$ for which there exists
a constant~$C=C(f)>0$ such that $|f(x)-f(y)|\le C|x-y|^{\alpha}$ for
any $x,y\in\mathbb{R}$. The infimum of such constants~$C$ is denoted
by $\|f\|_{\alpha}$. For $b>1$, G. H. Hardy proved in \cite{H} that
the Weierstrass function
$$
f_{b}(x)=\sum^{\infty}_{j=1} b^{-j\alpha} \cos (b^{j}x), \quad x\in\mathbb{R},
$$
is in $\Lambda_{\alpha}(\mathbb{R})$ and exhibits the extreme
behaviour
$$
\limsup_{h\to 0} \frac{| f_{b}(x+h)-f_{b}(x)|}{|h|^{\alpha}}
>0
$$
for any $x\in \mathbb{R}$. However fixed $x\in\mathbb{R}$ one may
expect many changes of sign of $f_{b}(x+h)-f_{b}(x)$ as ~$h \to 0$.
Next definition provides a way of quantifying it. Given a
function~$f\in\Lambda_{\alpha} (\mathbb{R})$ and $0<\varepsilon <
1/2$, consider
\begin{equation}\label{eq1.1}
\Theta_{\varepsilon}  (f)(x)=\int^{1}_{\varepsilon}
\frac{f(x+h)-f(x-h)} {h^{\alpha}} \frac{dh}{h},\quad x\in\mathbb{R}.
\end{equation}
It is clear that $\|\Theta_{\varepsilon}(f)\|_{\infty} \le
2^{\alpha} \|f\|_{\alpha} \log ( 1/\varepsilon)$. Moreover this
uniform estimate can not be improved as the elementary example
$f(x)=|x|^{\alpha} \operatorname{sign}(x)$ shows. However at almost
every point~$x$, the uniform estimate can be substantially improved.
The main result of the paper is the following

\begin{theorem}\label{theo1}
Fix $0<\alpha<1$. For $f\in \Lambda_{\alpha}(\mathbb{R})$ and
$0<\varepsilon <1/2$, let  $\Theta_{\varepsilon} (f)(x)$ be given by
\eqref{eq1.1}. Then, there exists a constant~$c(\alpha)>0$,
independent of $\varepsilon$ and $f$, such that
\begin{enumerate}
\item[(a)] For any interval $I\subset \mathbb{R}$, $|I|=1$, one has
$$
\int_{I} |\Theta_{\varepsilon}(f)(x)|^{2}\,dx \le c(\alpha) (\log
1/\varepsilon) \|f\|^2_{\alpha}.
$$
\item[(b)] At almost every point~$x\in\mathbb{R}$, one has
$$
\limsup_{\varepsilon \to
0^{+}}\frac{|\Theta_{\varepsilon}(f)(x)|}{\sqrt{\log ( 1/\varepsilon
) \log \log\log ( 1/\varepsilon )}} \le c(\alpha) \|f\|_{\alpha}.
$$
\end{enumerate}
\end{theorem}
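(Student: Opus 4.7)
The plan is to prove (a) by a Fourier-analytic / Littlewood-Paley argument and to deduce (b) from a subgaussian tail refinement of (a), combined with Borel-Cantelli along a geometric subsequence $\varepsilon_N = 2^{-N}$.

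For (a), view $\Theta_\varepsilon f = K_\varepsilon * f$ for the odd kernel $K_\varepsilon(h) = \operatorname{sign}(h)|h|^{-1-\alpha}\mathbf{1}_{\{\varepsilon\le|h|\le 1\}}$, whose Fourier multiplier
\[ \widehat{K_\varepsilon}(\xi) = -2i\int_\varepsilon^1 h^{-1-\alpha}\sin(h\xi)\,dh \]
satisfies, by a case analysis on $|\xi|$ relative to $1$ and $1/\varepsilon$ plus one integration by parts (using convergence of $\int_0^\infty u^{-1-\alpha}\sin u\,du$), the uniform estimate $|\widehat{K_\varepsilon}(\xi)|\lesssim\min(|\xi|,|\xi|^\alpha,\varepsilon^{-1-\alpha}|\xi|^{-1})$. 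Replacing $f$ by a compactly supported $g\in L^2\cap\Lambda_\alpha$ that agrees with $f$ on a neighbourhood of $I$ and satisfies $\|g\|_2+\|g\|_\alpha\lesssim\|f\|_\alpha$ (so $\Theta_\varepsilon g=\Theta_\varepsilon f$ on $I$), Littlewood-Paley decompose $g=\sum_j g_j$ with $g_j$ frequency-localized to $|\xi|\sim 2^j$; the Besov characterization of $\Lambda_\alpha$ gives $\|g_j\|_2\lesssim\|g_j\|_\infty\lesssim 2^{-j\alpha}\|f\|_\alpha$. Plancherel combined with the multiplier bound then yields $\|\Theta_\varepsilon g_j\|_2^2\lesssim\|f\|_\alpha^2$ for the $\approx\log_2(1/\varepsilon)$ active scales $0\le j\lesssim\log_2(1/\varepsilon)$, with geometric decay outside. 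Almost orthogonality of the frequency-localized $\Theta_\varepsilon g_j$ then gives (a) by summation.

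For (b), decompose $\Theta_{\varepsilon_N}f=\sum_{n=1}^N\psi_n$ with $\psi_n(x)=\int_{2^{-n}}^{2^{-n+1}}h^{-1-\alpha}[f(x+h)-f(x-h)]\,dh$, so $\|\psi_n\|_\infty\lesssim\|f\|_\alpha$. Since $|\Theta_\varepsilon f-\Theta_{\varepsilon_N}f|\lesssim\|f\|_\alpha$ for $\varepsilon\in[\varepsilon_{N+1},\varepsilon_N]$ and $\log\log\log(1/\varepsilon)$ varies slowly there, it suffices to control the limsup along $\varepsilon_N$. The heart of the argument is a subgaussian tail refinement of (a),
\[ \bigl|\{x\in I:|\Theta_{\varepsilon_N}f(x)|>\lambda\}\bigr|\le A\exp\!\bigl(-c\lambda^2/(N\|f\|_\alpha^2)\bigr), \qquad 0\le\lambda\lesssim N\|f\|_\alpha, \]
together with the analogous bound for the partial sums $\Theta_{\varepsilon_N}f-\Theta_{\varepsilon_{N'}}f$ (with $N$ replaced by $|N-N'|$), obtained by running the same proof on the truncated integral. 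Granting these, Borel-Cantelli along the subsequence $N_k=\lfloor\rho^k\rfloor$ with threshold $\lambda_k=c'\sqrt{N_k\log\log N_k}\,\|f\|_\alpha$ yields the desired limsup along $N_k$, and the difference bound plus a maximal inequality fills in the intermediate $N\in(N_k,N_{k+1}]$. The identification $\log\log N\asymp\log\log\log(1/\varepsilon_N)$ then produces the stated rate.

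The main obstacle is producing the subgaussian tail from (a). The plan is to first localize (a) to every scale: for each subinterval $J=[a,a+|J|]\subset I$ and each $\varepsilon>0$, the rescaling $\phi(y)=|J|^{-\alpha}(f(a+|J|y)-f(a))$ (which again lies in $\Lambda_\alpha$ with $\|\phi\|_\alpha\le\|f\|_\alpha$) together with the Hölder control of the $h\ge|J|$ piece gives
\[ \inf_{c\in\mathbb R}\frac{1}{|J|}\int_J|\Theta_\varepsilon f-c|^2 \lesssim \log(1/\varepsilon)\|f\|_\alpha^2. \]
This scale-uniform square-BMO-type bound, together with the uniform pointwise bound $\|\psi_n\|_\infty\lesssim\|f\|_\alpha$ valid across all dyadic scales, feeds a Chang-Wilson-Wolff / John-Nirenberg-style self-improvement that upgrades the $L^2$ estimate to the $L^p$ bounds $\|\Theta_{\varepsilon_N}f\|_{L^p(I)}\lesssim\sqrt{pN}\|f\|_\alpha$ for all $p\ge 2$; by Markov this is equivalent to the subgaussian tail. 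This bootstrap from $L^2$-control on every scale to exponential square integrability is the crucial technical step.
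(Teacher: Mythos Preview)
Your Fourier/Littlewood--Paley route to (a) is different from the paper's (which obtains (a) as a corollary of the subgaussian estimate) and is essentially sound, modulo routine care in the localization $f\mapsto g$ and in the $\|g_j\|_2\lesssim\|g_j\|_\infty$ step.

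The real gap is in the subgaussian tail needed for (b). You propose to extract $\|\Theta_{\varepsilon_N} f\|_{L^p(I)}\lesssim\sqrt{pN}\,\|f\|_\alpha$ from the scale-uniform $L^2$ bound together with $\|\psi_n\|_\infty\lesssim\|f\|_\alpha$, via a ``Chang--Wilson--Wolff / John--Nirenberg-style self-improvement''. Neither ingredient delivers this as stated. John--Nirenberg applied to a function of BMO norm $O(\sqrt{N}\,\|f\|_\alpha)$ yields only $\|\cdot\|_p\lesssim p\sqrt{N}\,\|f\|_\alpha$, i.e.\ an exponential rather than subgaussian tail, and the scale-dependent oscillation refinement you sketch does not obviously improve the $p$-dependence to $\sqrt{p}$. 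Chang--Wilson--Wolff, on the other hand, requires a pointwise bound on a \emph{dyadic square function}, not a BMO-type bound, and you have not produced one. The underlying obstruction is that your shell pieces $\psi_n$ are merely uniformly bounded functions with no cancellation relative to any filtration; they are not martingale differences, so neither Azuma nor CWW applies to $\sum_n\psi_n$ directly. You explicitly flag this bootstrap as ``the crucial technical step'', but the proposal does not actually carry it out.

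This missing structure is exactly what the paper supplies. In a dyadic model it sets $S_k(x)=\Delta f(I_k^{(\rho)}(x))/|I_k^{(\rho)}(x)|$, which \emph{is} a dyadic martingale with $\|S_k\|_\infty\le 2^{k(1-\alpha)}\|f\|_\alpha$, and forms $\Gamma_N=\sum_{k\le N}2^{-k(1-\alpha)}S_k$. A summation by parts shows $\Gamma_N$ differs by $O(\|f\|_\alpha)$ from the genuine bounded-increment martingale $T_N=\sum_{k\le N}2^{-k(1-\alpha)}(S_k-S_{k-1})$, to which the martingale subgaussian estimate applies and gives exponential-square integrability of $\Gamma_N^\ast$. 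The transfer to the continuous $\Theta_{2^{-N}} f$ is then achieved by averaging over translates and dilates of the dyadic grid (the Garnett--Jones device): one shows $\Theta_{2^{-N}}f(x)$ equals, up to a uniformly bounded error, an average of $\Gamma_N^{(\rho)}(f_s)(x+s)$ over $(\rho,s)$, and Jensen's inequality pushes the exponential integrability through the average to yield the subgaussian estimate for $\Theta_{2^{-N}}^\ast f$. These two devices---the summation-by-parts reduction to a bona fide bounded-increment martingale, and the grid-averaging transfer from dyadic to continuous---constitute the substantive content of the proof, and your proposal does not replace them.
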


The main technical step in the proof is the following estimate which provides the right subgaussian decay: there exists a constant~$c=c(\alpha)>0$ such that for any $t>0$ one has
\begin{equation}\label{eq1.2}
|\{ x\in [0,1] : |\Theta^{*}_{\varepsilon} (f)(x)| >t \sqrt{\log (
1/\varepsilon )} \|f\|_{\alpha} \}| \le c e^{-t^{2}/c}.
\end{equation}
Here $\Theta^{*}_{\varepsilon}(f)$ is the maximal function given by
$\Theta^{*}_{\varepsilon}(f)(x)=\sup \{|\Theta_{\delta}(f)(x)|:
1/2\ge \delta\ge \varepsilon\}$. Theorem~\ref{theo1} follows from
this subgaussian estimate by standard arguments. Our proof of
\eqref{eq1.2} is organized in two steps. First we state and prove a
dyadic version of \eqref{eq1.2} and later we use an averaging
procedure due to J.~Garnett and P.~Jones~(\cite{GJ}).
Theorem~\ref{theo1} is sharp up to the value of the
constant~$c(\alpha)$. Moreover there exists~$f\in
\Lambda_{\alpha}(\mathbb{R})$ for which there exists a
constant~$c=c(f)>0$ such that for any $0<\varepsilon < 1/2$ one has
$$
\int^{1}_{\varepsilon}
\frac{|f(x+h)-f(x-h)|}{h^{\alpha}}\frac{dh}{h} >c\log (
1/\varepsilon )
$$
at almost every $x\in\mathbb{R}$. So, Theorem~\ref{theo1} holds due
to certain cancellations which occur in the integral defining
$\Theta_{\varepsilon}(f)(x)$.

Subgaussian estimates and Law's of the Iterated Logarithm play a
central role in the boundary behavior of martingales and have also
appeared in function theory. For instance, in the relation between
the boundary behaviour of a harmonic function in an upper half space
and the size of its area function (\cite{W}, \cite{CWW}, \cite{BM})
or in differentiability properties of functions defined in the
euclidean space~(\cite{AP}, \cite{SV}). Our result is inspired in
the nice work of Y.~Lyubarskii and E.~Malinnikova (\cite{LM}) who
studied the oscillation of harmonic functions in the Koremblum
class. Related results can be found in ~\cite{E}, ~\cite{EM} and
\cite{EMM}.

The paper is organized as follows. Section~\ref{section2} is devoted
to the dyadic version of Theorem~\ref{theo1}. The averaging
procedure which is used to prove the results in the continuous
setting from their dyadic counterparts, is given in
Section~\ref{section3}. Section~\ref{section4} contains the proof of
the subgaussian estimate~\eqref{eq1.2} as well as the proof of
Theorem~\ref{theo1}. In Section~\ref{section5}, the sharpness of the
results is discussed. Finally, Section~\ref{section6} provides a
higher dimensional analogue of Theorem~\ref{theo1}.

The letters~$c$ and $c(\alpha)$ will denote a constant and a constant depending on the parameter~$\alpha$ whose value may change from line to line.

It is a pleasure to thank Eugenia Malinnikova for several sharp
remarks on a first version of this paper.

\section{Dyadic Model}\label{section2}

For $1\le \rho \le 2$, let $\mathcal{D} =\mathcal{D}(\rho)$ be the
collection of intervals of the form~$[j2^{-k}\rho, (j+1)
2^{-k}\rho)$, where $j\in\mathbb{Z} $ and $k=0,1,2,\dotsc$ . Let
$\mathcal{D}_{k}=\mathcal{D}_{k}(\rho)$ be the collection of
intervals of $\mathcal{D}$ of length~$2^{-k}\rho$ and let
$\mathcal{F}_{k} = \mathcal{F}_{k} (\rho)$ be the $\sigma$-algebra
generated by the intervals of $\mathcal{D}_{k}$. In the rest of this
section the number~$1\le \rho\le 2$ is fixed. A dyadic martingale is
a sequence of functions~$\{S_{k}\}$ defined in $[0, \rho]$ such that
for any $k=0,1,2,\dotsc$ the following two conditions hold:
(a)~$S_{k}$ is adaptated to~$\mathcal{F}_{k}$; (b)~the conditional
expectation of $S_{k+1}$ respect to $\mathcal{F}_{k}$ is $S_{k}$. In
other words: $S_{k}$ is constant in each interval
of~$\mathcal{D}_{k}$ and
$$
\frac{1}{|I|} \int_{I} (S_{k+1}(x)-S_{k}(x))\,dx=0
$$
for any $I\in \mathcal{D}_k$, $k=0,1,2,\dotsc$. Given a dyadic
martingale~$\{S_{n}\}$, its quadratic variation~$\langle
S\rangle_{n}$ is defined as
$$
\langle S\rangle^{2}_{n} (x)=\sum^{n}_{k=1} (S_{k}(x)-S_{k-1}(x))^{2},\quad n=1,2,\dotsc .
$$
It is well known that the quadratic variation governs the boundary
behaviour of the martingale. More concretely, the sets $\{ x \in [0,
\rho] :  \lim\limits_{n\to\infty} S_{n}(x) \text{ exists} \}$ and
$\{x \in [0, \rho]: \langle S\rangle_{\infty}(x)<\infty \}$ coincide
except at most for a set of Lebesgue measure $0$. Moreover there
exits a universal constant $c>0$ such that
$$
\limsup_{n\to\infty} \frac{|S_{n}(x)|}{\sqrt{\langle
S\rangle^{2}_{n}(x) \log\log \langle S\rangle_n (x)}}\le c,
$$
at almost every point~$x$ where $\langle
S\rangle_{\infty}(x)=\infty$. We also mention that an elementary
orthogonality  argument gives that
$$
\int_{0}^{\rho} |S_{n}(x)|^{2}\,dx=\int^{\rho}_{0} \langle S\rangle^{2}_{n}(x)\,dx,\quad n=1,2,\dotsc .
$$

Fix $0<\beta <1$. Let $\{S_{n}\}$ be a dyadic martingale satisfying $\|S_{n}\|_{\infty}\le 2^{n\beta}$, $n=0,1,2,\dotsc$ . For $N=1,2,\dotsc$, consider
$$
\Gamma_{N}(x)=\Gamma_N(\{S_{n}\}) (x)=\sum^{N}_{k=1} 2^{-k\beta}
S_{k}(x).
$$
It is clear that $\|\Gamma_N\|_{\infty}\le N$. Moreover this uniform
estimate is best possible. Actually, if the initial
martingale~$\{S_{n}\}$ satisfies, $S_{0} \equiv 0$,
$\|S_{n}\|_{\infty}=2^{n\beta}$ and $S_{k}(x) =2^{k \beta}$ for some
$x\in\mathbb{R}$ and any $k \leq N$; then  $\|\Gamma_N
(\{S_{n}\})\|_{\infty}=N$. However, as next result shows, this
uniform estimate can be substantially improved at almost every
point. Parts~(b) and (c) are the discrete analogues of
Theorem~\ref{theo1}.

\begin{theorem}\label{theo2}
Fix $0<\beta<1$ and $C>0$. Let $\{S_{n}\}$ be a dyadic margingale
with respect $\mathcal{D}(\rho)$ with $S_{0}\equiv 0$ and
$\|S_{n}\|_{\infty}\le C 2^{n\beta}$, $n=1,2,\dotsc$ . For
$N=1,2,\dotsc$, consider
\begin{align*}
\Gamma_N(x)&= \sum^{N}_{k=1} 2^{-k\beta} S_{k}(x),\\*[3pt]
\Gamma_N^{*}(x)&= \sup_{k\le N} |\Gamma_{k}(x)|.
\end{align*}
Then, there exists a constant $c= c(\beta , C)>0$ such that
\begin{enumerate}
\item[(a)] For any $\lambda >0$ and any $N=1,2,\dotsc,$ one has
$$
\int^{\rho}_{0} \exp \left( \lambda \Gamma^{*}_{N}(x) \right) \,dx
\le c e^{c \lambda^{2}N}.
$$
\item[(b)] For any $N=1,2,\dotsc$, one has
$$
\int^{\rho}_{0} |\Gamma_N^{*}(x)|^{2}\,dx \le c N.
$$
\item[(c)] For almost every $x\in [0,\rho]$ one has
$$
\limsup_{n\to\infty} \frac{|\Gamma_N(x)|}{\sqrt{N \log\log N}}\le c
.
$$
\end{enumerate}
\end{theorem}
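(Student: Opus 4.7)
The plan is to observe that $\Gamma_N$ is not itself a martingale in $N$ (indeed, the conditional expectation of $\Gamma_{N+1}-\Gamma_N$ given $\mathcal{F}_N$ is $2^{-(N+1)\beta}S_N\neq 0$) but lies within bounded distance of one, after which standard martingale technology applies. Writing $d_j=S_j-S_{j-1}$ and swapping the order of summation,
$$\Gamma_N=\sum_{j=1}^N\Bigl(\sum_{k=j}^N 2^{-k\beta}\Bigr)d_j.$$
Replacing the inner sum by its limit $a_j:=2^{-j\beta}/(1-2^{-\beta})$ defines the dyadic martingale
$$M_N:=\sum_{j=1}^N a_j\,d_j,$$
and a direct computation gives
$$M_N-\Gamma_N=\Bigl(\sum_{k=N+1}^\infty 2^{-k\beta}\Bigr)S_N,$$
which by the hypothesis $\|S_N\|_\infty\le C\,2^{N\beta}$ is bounded uniformly in $x$ and $N$ by a constant $c(\beta,C)$. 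Hence $\Gamma_N^*\le M_N^*+c(\beta,C)$, and it suffices to prove (a), (b) and (c) with $M_N$ in place of $\Gamma_N$.

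The point is that the martingale differences of $M_N$ are bounded uniformly in $L^\infty$: since $\|d_j\|_\infty\le 2C\,2^{j\beta}$, one has $\|a_j d_j\|_\infty\le c(\beta,C)$, so the quadratic variation satisfies the pointwise bound $\langle M\rangle_N^2\le c(\beta,C)\,N$. Part (b) then follows from Doob's $L^2$ maximal inequality combined with the martingale orthogonality identity quoted in this section:
$$\int_0^\rho |M_N^*|^2\,dx\le 4\int_0^\rho M_N^2\,dx=4\int_0^\rho \langle M\rangle_N^2\,dx\le cN.$$
Part (c) follows from the dyadic Law of the Iterated Logarithm recalled above: on $\{\langle M\rangle_\infty=\infty\}$ one has $\limsup|M_N|/\sqrt{\langle M\rangle_N^2\log\log\langle M\rangle_N^2}\le c$ almost everywhere, which together with $\langle M\rangle_N^2\le cN$ yields $\limsup|M_N|/\sqrt{N\log\log N}\le c'$; elsewhere $M_N$ converges almost everywhere and is bounded, making the claim trivial. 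For part (a), Hoeffding's lemma applied conditionally and iteratively gives $\int_0^\rho \exp(\pm\lambda M_N)\,dx\le\rho\exp(c\lambda^2 N)$, so that Doob's submartingale maximal inequality applied to $\exp(\pm\lambda M_n)$ produces the subgaussian tail bound $|\{M_N^*>t\}|\le 2\exp(-t^2/(cN))$. Integrating this tail against $\lambda e^{\lambda t}$ yields the exponential moment estimate claimed in (a).

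The main obstacle, and essentially the only non-routine step, is identifying the correct auxiliary martingale $M_N$ and verifying that $M_N-\Gamma_N$ is bounded by the hypothesis on $\{S_n\}$; once this reduction is in place, parts (a), (b) and (c) are standard applications of classical dyadic martingale theory for martingales with uniformly bounded differences.
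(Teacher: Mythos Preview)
Your proof is correct and follows essentially the same route as the paper. The paper defines $T_n=\sum_{k=1}^n 2^{-k\beta}(S_k-S_{k-1})$, which is exactly $(1-2^{-\beta})M_n$ in your notation, obtains the relation $T_n=(1-2^{-\beta})\Gamma_{n-1}+2^{-n\beta}S_n$ via summation by parts (equivalent to your swap of summation order), and then applies the same trio of tools --- the subgaussian estimate, Doob's maximal inequality, and the LIL --- to the bounded-difference martingale $\{T_n\}$.
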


\begin{proof}
We can assume $C=1$. Although $\{\Gamma_N\}$ is not a dyadic
martingale, we will show that its size is comparable to the size of
a dyadic martingale with bounded differences. Actually, consider the
dyadic martingale~$\{T_{n}\}$ defined by $T_{0}\equiv 0$ and
$$
T_{n}=\sum^{n}_{k=1} \frac{S_{k}-S_{k-1}}{2^{k \beta}}, \quad
n=1,2,\dotsc\,.
$$
The subgaussian estimate (see \cite[p.~69]{BM}) gives that
$$ |\{x
\in [0, \rho ]: T_{n}^{*}(x)
> t \}| \leq 2 \exp(-t^2 / 2 \|\langle T\rangle^{2}_{n}\|_{\infty}
)\, ,$$ for any $t >0$. Here $T_{n}^{*} (x) = \sup \{ |T_k (x)| : 1
\leq k \leq n \}$. Hence
$$
\int^{\rho}_{0}\exp \left(  T_{n}^{*}(x) \right) \,dx =
\int_0^\infty e^t |\{x \in [0, \rho ]: T_{n}^{*}(x)
> t \}| \, dt \leq 2 \int_0^\infty \exp \left( t - t^2 /2 \|\langle
T\rangle^{2}_{n}\|_{\infty} \right) dt
$$
We deduce that
$$
\int^{\rho}_{0}\exp \left( T_{n}^{*}(x) \right)\,dx \le 2 \sqrt{2
\pi} \|\langle T\rangle_{n}\|_{\infty} \exp \left( \|\langle
T\rangle^{2}_{n}\|_{\infty} / 2  \right) ,\quad n=1,2,\dotsc
$$
Since $\|T_{n+1}-T_{n}\|_{\infty}\le 1+2^{-\beta}$ for any~$n$, one
has $\|\langle T\rangle^{2}_{n}\|_{\infty}\le n (1+2^{-\beta})^{2}$
for $n=1,2,\dotsc$ . We deduce that for any $\lambda >0$, one has
$$
\int^{\rho}_{0} \exp \left( \lambda T_{n}^{*}(x) \right) \,dx \le 2
(1+2^{-\beta}) \sqrt{2 \pi n}   \lambda \exp \left(
\frac{\lambda^{2}}{2} n(1+2^{-\beta})^{2} \right),\quad
n=1,2,\dotsc, .
$$
On the other hand, summation by parts gives that
$$
T_{n} = (1-2^{-\beta})\Gamma_{n-1}+ 2^{-n\beta}S_{n}.
$$
Hence
\begin{equation}\label{eq2.1}
\Gamma_n^{*} \le (1-2^{-\beta})^{-1} ( T_{n+1}^{*} +1)
\end{equation}
We deduce that for any $\quad n=1,2,\dotsc,$ and any $ \lambda>0$,
one has
$$
\int^{\rho}_{0} \exp \left( \lambda \Gamma_n^{*}(x) \right) \,dx \le
2 \frac{1+2^{-\beta}}{1- 2^{-\beta}} \sqrt{2 \pi (n+1) }   \lambda
\exp \left( \lambda (1-2^{-\beta})^{-1} \right) \exp \left(
\frac{1}{2}\left( \frac{1+2^{-\beta}}{1-2^{-\beta}}\right)^{2}
\lambda^{2} (n+1) \right) .
$$
Hence, the trivial estimate $\lambda (1-2^{-\beta})^{-1}\le
\lambda^{2}/2 +(1-2^{-\beta})^{-2}/2$ finishes the proof of (a).

The estimate~\eqref{eq2.1} gives
$$
\int^{\rho}_{0} |\Gamma_n^{*}(x)|^{2}\,dx \le 2 (1-2^{-\beta})^{-2}
\int^{\rho}_{0} |T^{*}_{n+1} (x)|^{2}\,dx+ 2 \rho
(1-2^{-\beta})^{-2}.
$$
Since by Doob's maximal inequality (\cite[p.493]{S})
$$
\int^{\rho}_{0}|T^{*}_{n+1}(x)|^{2}\,dx \le c \int^{\rho}_{0}
\langle T\rangle_{n+1}^{2}(x)\,dx \le c(n+1),
$$
(b) follows. Finally, the Law of the Iterated Logarithm applied to~$\{T_{n} \}$ gives
$$
\limsup_{n\to\infty} \frac{|T_{n}(x)|}{\sqrt{n \log\log n}} \le c\text{ a.e.\ }x.
$$
We deduce
$$
\limsup_{n\to\infty} \frac{|\Gamma_n(x)|}{\sqrt{n \log\log n}} \le
c(1-2^{-\beta})^{-1}\text{ a.e.\ }x
$$
which finishes the proof.
\end{proof}

\section{Averaging}\label{section3}

An averaging procedure due to J.~Garnett and P.~Jones (\cite{GJ})
will be used to go from the discrete situation of
Theorem~\ref{theo2} to the continuous one of Theorem~\ref{theo1}.

Given $x\in\mathbb{R}$, let $I_{k}^{\rho}(x)$ be the unique
interval of $\mathcal{D}(\rho)$ of length~$2^{-k}\rho$ which contains $x$. Given a function~$f\colon \mathbb{R}\to\mathbb{R}$ and an interval~$I=[a,b)$ we denote $\Delta f(I)=f(b)-f(a)$ and consider the dyadic martingale with respect to the filtration~$\mathcal{D}(\rho)$ given by
$$
S_{k}^{(\rho)}(f)(x)= \frac{\Delta f(I_{k}^{(\rho)}(x))}{2^{-k}
\rho},\quad k=0,1,2,\dotsc\,.
$$
If $f\in\Lambda_{\alpha}(\mathbb{R})$, we have
$\|S_{k}^{(\rho)}(f)\|_{\infty} \le (2^{k}/ \rho )^{\beta} \| f
\|_\alpha$, $k=0,1,\dotsc$ where $\beta=1-\alpha$. As in
Section~\ref{section2}, consider
\begin{equation}\label{eq3.1}
\Gamma_{n}^{(\rho)} (f)(x)= \Gamma_{n}^{(\rho)} (\{
S_{k}^{(\rho)}\})(x)=\sum^{n}_{k=1} 2^{-k\beta}\rho^{\beta}
S_{k}^{(\rho)}(f)(x) = \sum^{n}_{k=1} \frac{\Delta f
(I_{k}^{(\rho)}(x))}{(2^{-k}\rho)^{\alpha}}.
\end{equation}
The main purpose of this section is to describe an averaging argument with respect both $\rho\in [1,2]$ and translates of the dyadic net~$\mathcal{D}(\rho)$. We start with a preliminary result.

\begin{lemma}\label{lem3.1}
Let $f\colon \mathbb{R}\to \mathbb{R}$ be a locally integrable function. For $s\in\mathbb{R}$ let $f_{s}$ be the function defined by $f_{s}(x)=f(x-s)$, $x\in\mathbb{R}$. Then for any $x\in\mathbb{R}$ and any $k=1,2,\dotsc$, one has
$$
\int^{\rho}_{0}\Delta f_{s} (I_{k}^{(\rho)} (x+s))\,ds= 2^{k} \int_{0}^{2^{-k}\rho} (f(x+t)-f(x-t))\,dt.
$$
\end{lemma}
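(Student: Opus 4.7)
The plan is to identify a periodicity structure in the integrand and then reduce to a single period by direct computation.

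First I would unwind the definition. Write $I_k^{(\rho)}(x+s) = [a(s), b(s))$ where $a(s) = j(s) 2^{-k}\rho$, $b(s) = a(s) + 2^{-k}\rho$, and $j(s)$ is the unique integer with $a(s) \le x+s < b(s)$. Then by definition
$$\Delta f_s(I_k^{(\rho)}(x+s)) = f_s(b(s)) - f_s(a(s)) = f(b(s)-s) - f(a(s)-s).$$
Introducing the "sawtooth" variable $t(s) = x + s - a(s) \in [0, 2^{-k}\rho)$, the arguments become $a(s)-s = x - t(s)$ and $b(s)-s = x + 2^{-k}\rho - t(s)$, so the integrand equals $f(x + 2^{-k}\rho - t(s)) - f(x - t(s))$.

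The key observation is that if $s$ is replaced by $s + 2^{-k}\rho$, then both $a(s)$ and $b(s)$ shift by exactly $2^{-k}\rho$, so $a(s)-s$ and $b(s)-s$ are unchanged; equivalently, $t(s)$ is $2^{-k}\rho$-periodic. Since the length $\rho$ of the interval of integration equals $2^k$ times this period, one gets
$$\int_0^\rho \Delta f_s(I_k^{(\rho)}(x+s))\,ds = 2^k \int_J \bigl[f(x+2^{-k}\rho - t(s)) - f(x - t(s))\bigr] ds$$
where $J$ is any interval of length $2^{-k}\rho$. Choosing $J$ so that a single dyadic interval of $\mathcal{D}_k(\rho)$ contains $x+s$ throughout (for instance, translating so $a(s)$ is constant on $J$), the map $s \mapsto t(s)$ is a linear bijection from $J$ onto $[0, 2^{-k}\rho)$ with derivative $1$, and the integral simplifies to $\int_0^{2^{-k}\rho} [f(x+2^{-k}\rho - t) - f(x-t)]\,dt$.

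Finally, the substitution $\tau = 2^{-k}\rho - t$ in the first term converts $\int_0^{2^{-k}\rho} f(x + 2^{-k}\rho - t)\,dt$ into $\int_0^{2^{-k}\rho} f(x+\tau)\,d\tau$, yielding the desired formula. No real obstacle is anticipated beyond keeping the bookkeeping of the sawtooth and the period correct; the whole lemma is essentially a periodicity-plus-change-of-variable computation.
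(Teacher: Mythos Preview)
Your proof is correct and is essentially the same computation as the paper's, repackaged via the periodicity of the integrand in $s$ rather than by explicitly summing over the $2^{k}$ subintervals of $[0,\rho]$ of length $2^{-k}\rho$. The paper splits each such subinterval into two pieces $J\cup K$ (according to where $I_{k}^{(\rho)}(x+s)$ jumps), computes on each piece, and arrives at the same single-period integral you obtain; your sawtooth variable $t(s)$ and choice of a period aligned with the jump simply streamline that bookkeeping.
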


\begin{proof}
Fix $x\in\mathbb{R}$ and $k=1,2,\dotsc$ . Let $I_{k}^{(\rho)}(x)=[a,b)$. Fix an integer~$j$ with $0\le j\le 2^{k}-1$ and consider $[2^{-k}j\rho,2^{-k}(j+1)\rho)=J\cup K$ where $J=J(x)= [2^{-k}j\rho, 2^{-k}j\rho+b-x)$ and $K=K(x)=[2^{-k}j\rho+b-x, 2^{-k}(j+1)\rho)$. Note that for $s\in J$ one has $I_{k}^{(\rho)}(x+s)=[a+2^{-k}j\rho,b+2^{-k}j\rho)$ and
$$
\int_{J} \Delta f_{s} (I_{k}^{(\rho)}(x+s))\,ds = \int_{J}
(f(b+2^{-k}j\rho-s)-f(a+2^{-k}j\rho-s)) \,ds =
 \int_{0}^{b-x} (f(x+t) -f(x+t-2^{-k} \rho))\,dt.
$$
For $s\in K$ one has $I_{k}^{(\rho)} (x+s)=[a+2^{-k}(j+1)\rho,
b+2^{-k}(j+1)\rho)$ and
\begin{equation*}
\begin{split}
\int_{K}\Delta f_{s} (I_{k}^{(\rho)} (x+s))\,ds&= \int_{K} (f(b+2^{-k}(j+1)\rho -s)-f (a+2^{-k}(j+1)\rho -s))\,ds\\*[5pt]
&=\int_{b-x}^{2^{-k}\rho}(f(x+t)-f(x+t-2^{-k}\rho))\,dt.
\end{split}
\end{equation*}
Thus
$$
\int^{2^{-k}(j+1)\rho}_{2^{-k}j\rho}\Delta f_{s}(I_{k}^{(\rho)}(x+s))\,ds=\int_{0}^{2^{-k}\rho}(f(x+t)-f(x+t-2^{-k}\rho))\,dt
=\int_{0}^{2^{-k}\rho}(f(x+t)-f(x-t))\,dt.
$$
Adding on $j=0,\dotsc,2^{k}-1$, one finishes the proof.
\end{proof}

We now state the main result of this section.

\begin{proposition}\label{prop3.2}
Fix $0 < \alpha \leq 1$. Let $f$ be a locally integrable function.
For $s\in\mathbb{R}$ let $f_{s}$ be the function defined by
$f_{s}(x)=f(x-s)$, $x\in\mathbb{R}$. For $n=1,2,\dotsc$, consider
$\Gamma_{n}^{(\rho)}(f_{s})$ as defined in~\eqref{eq3.1}. Then for
any $x\in\mathbb{R}$, one has
$$
\int^{2}_{1}\int^{\rho}_{0}\Gamma_{n}^{(\rho)} (f_{s}) (x+s)\,ds
\frac{d\rho}{\rho^{2}} =\frac{1}{1+\alpha}
\int^{1}_{2^{-n}}\frac{f(x+t)-f(x-t)}{t^{1+\alpha}}\,dt+ A_{n}
(f)(x)
$$
where
$$
 |A_{n} (f)(x)|\le c(\alpha) \int^{1}_{2^{-n}}|f(x+t)-f(x-t)|\,dt
+c(\alpha) 2^{n(1+\alpha)}\int_{0}^{2^{-n}}|f(x+t)-f(x-t)|\,dt.
$$
In particular if $f\in \Lambda_{\alpha} (\Bbb R)$, one has
$\sup_{n,x} |A_{n} (f)(x)| < C(\alpha ) \|f \|_\alpha $
\end{proposition}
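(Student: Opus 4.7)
The plan is to compute the double integral term by term, applying Lemma~\ref{lem3.1} to each level $k$ of the sum defining $\Gamma_n^{(\rho)}(f_s)$, and then reorganize by swapping the order of the $k$-sum and the $\rho$-integration. Writing $g(t)=f(x+t)-f(x-t)$, Lemma~\ref{lem3.1} gives
$$
\int_0^\rho \Delta f_s\bigl(I_k^{(\rho)}(x+s)\bigr)\,ds
= 2^k \int_0^{2^{-k}\rho} g(t)\,dt,
$$
so by \eqref{eq3.1},
$$
\int_0^\rho \Gamma_n^{(\rho)}(f_s)(x+s)\,ds
= \sum_{k=1}^n \frac{2^k}{(2^{-k}\rho)^\alpha}\int_0^{2^{-k}\rho} g(t)\,dt
= \sum_{k=1}^n 2^{k(1+\alpha)}\rho^{-\alpha}\int_0^{2^{-k}\rho} g(t)\,dt.
$$

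Next I would integrate against $d\rho/\rho^2$ over $[1,2]$ and, in the $k$-th term, substitute $u=2^{-k}\rho$, $du=2^{-k}d\rho$. Since $\rho^{-\alpha-2}=2^{-k(\alpha+2)}u^{-\alpha-2}$, the factor $2^{k(1+\alpha)}$ cancels exactly, and $u$ now ranges over $[2^{-k},2^{-k+1}]$. The crucial observation is that these intervals, as $k=1,\dots,n$, form a partition of $[2^{-n},1]$. Hence the whole expression telescopes into a single integral:
$$
\int_1^2\!\!\int_0^\rho \Gamma_n^{(\rho)}(f_s)(x+s)\,ds\,\frac{d\rho}{\rho^2}
= \int_{2^{-n}}^{1} u^{-\alpha-2}\,G(u)\,du,
\qquad G(u):=\int_0^u g(t)\,dt.
$$

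Now I would integrate by parts with $dV=u^{-\alpha-2}du$, $V=-u^{-\alpha-1}/(\alpha+1)$, and $U=G(u)$, $dU=g(u)du$. This produces the main term
$$
\frac{1}{1+\alpha}\int_{2^{-n}}^{1}\frac{f(x+t)-f(x-t)}{t^{1+\alpha}}\,dt
$$
plus the boundary contribution
$$
A_n(f)(x)=\frac{1}{1+\alpha}\Bigl(2^{n(1+\alpha)}G(2^{-n})-G(1)\Bigr).
$$
Splitting $G(1)=\int_0^{2^{-n}}g+\int_{2^{-n}}^{1}g$ and applying the triangle inequality yields the displayed estimate for $|A_n(f)(x)|$ (the piece $\int_0^{2^{-n}}|g|$ coming from $G(1)$ is absorbed by the second term since $2^{n(1+\alpha)}\ge 1$).

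Finally, if $f\in\Lambda_\alpha(\mathbb{R})$ then $|f(x+t)-f(x-t)|\le 2^\alpha\|f\|_\alpha\, t^\alpha$, so both $\int_{2^{-n}}^{1} t^\alpha\,dt$ and $2^{n(1+\alpha)}\int_0^{2^{-n}} t^\alpha\,dt$ are bounded uniformly in $n$ by constants depending only on $\alpha$, giving $\sup_{n,x}|A_n(f)(x)|\le C(\alpha)\|f\|_\alpha$. The only step requiring any care is the substitution-and-tiling identity in the second paragraph: one has to verify that the $k$-dependent prefactors disappear precisely so that the intervals $[2^{-k},2^{-k+1}]$ fit together with matching weight, which is exactly what makes the argument work and, ultimately, why the exponent $1+\alpha$ appears in the main term.
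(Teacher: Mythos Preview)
Your proof is correct and follows the same route as the paper: apply Lemma~\ref{lem3.1} level by level, substitute $u=2^{-k}\rho$ so that the intervals $[2^{-k},2^{-k+1}]$ tile $[2^{-n},1]$, and then extract the main term. The only cosmetic difference is that in the last step the paper swaps the order of integration via Fubini while you integrate by parts; both produce exactly the same remainder $A_n(f)(x)=\frac{1}{1+\alpha}\bigl((2^{n(1+\alpha)}-1)\int_0^{2^{-n}}g - \int_{2^{-n}}^1 g\bigr)$.
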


\begin{proof}
For $k=1,2,\dotsc$, consider
$$
B_{k}=\int^{2}_{1}\int^{\rho}_{0} \frac{\Delta (f_{s})
(I_{k}^{(\rho)}(x+s))}{(2^{-k}\rho)^{\alpha}} \,ds
\frac{d\rho}{\rho^{2}}.
$$
Lemma~\ref{lem3.1} gives that
$$
B_{k}=\int^{2}_{1}2^{k}\int^{2^{-k}\rho}_{0} \frac{ f(x+t)-
f(x-t)}{(2^{-k}\rho)^{\alpha}} \,dt \frac{d\rho}{\rho^{2}}.
$$
The change of variables $h=2^{-k}\rho$ gives
$$
B_{k}=\int^{2^{-k+1}}_{2^{-k}}\frac{1}{h^{2+\alpha}} \int^{h}_{0}
(f(x+t)-f(x-t))\,dt\,dh.
$$
Adding on $k=1,\dotsc,n$, one deduces
$$
\int^{2}_{1}\int^{\rho}_{0} \Gamma_{n}^{(\rho)} (f_{s}) (x+s)\,ds
\frac{d\rho}{\rho^{2}}=\int^{1}_{2^{-n}} \frac{1}{h^{2+\alpha}}
\int^{h}_{0}(f(x+t)-f(x-t))\,dt\,dh.
$$
Applying Fubini's Theorem one deduces
\begin{equation*}
\begin{split}
\int^{2}_{1} \int^{\rho}_{0} \Gamma_{n}^{(\rho)} (f_{s}) (x+s)\,ds
\frac{d\rho}{\rho^{2}}&=\frac{1}{1+\alpha} \int^{1}_{2^{-n}}
\frac{f(x+t)-f(x-t)}{t^{1+\alpha}}\,dt\\*[7pt] &\quad
-\frac{1}{1+\alpha} \int^{1}_{2^{-n}} (f(x+t)-f(x-t))\,dt\\*[7pt]
&\quad+\frac{2^{n(1+\alpha)}-1}{1+\alpha} \int^{2^{-n}}_{0}
(f(x+t)-f(x-t))\,dt
\end{split}
\end{equation*}
which finishes the proof.
\end{proof}

\section{Continuous setting}\label{section4}

In this section, the results of the dyadic model of Section~\ref{section2} and the averaging procedure of Section~\ref{section3} will be used to prove Theorem~\ref{theo1}.

Given $f\in\Lambda_{\alpha} (\mathbb{R})$ and $0<\varepsilon <1$,
pick an integer~$N$ such that
 $2^{-N-1} \le\varepsilon <2^{-N}$. Observe that $|\Theta_{\varepsilon} (f)(x)-\Theta_{2^{-N}}(f)(x)|\le 2 \|f\|_{\alpha}$.
 Hence the estimates of $\Theta_{2^{-N}} (f)(x)$ can be easily transferred to
 $\Theta_{\varepsilon}(f)(x)$.
 The main technical step in proving the relevant subgaussian estimate of $\Theta_{2^{-N}}(f)(x)$ is stated in next result.

\begin{proposition}\label{prop4.1}
Let $f\in \Lambda_{\alpha} ([-1,2])$ with $\|f\|_{\alpha}\le 1$. For
$x\in [0,1]$ and $N=1,2,\dotsc$, consider
\begin{align*}
\Theta_{2^{-N}} (f)(x) &=\int^{1}_{2^{-N}} \frac{f(x+h)-f(x-h)}{h^{\alpha}} \frac{dh}{h},\\
\Theta_{2^{-N}}^{*}(f)(x) &=\sup_{k\le N} |\Theta_{2^{-k}}(f)(x)|.
\end{align*}
Then, there exists a constant $c(\alpha)>0$ such that for any $\lambda>0$ and any $N=1,2,\dotsc$, one has
$$
\int^{1}_{0} \exp \left( \lambda \Theta_{2^{-N}}^{*}(f)(x) \right)
\,dx\le c(\alpha) \exp \left( c(\alpha)\lambda^{2}N \right).
$$
\end{proposition}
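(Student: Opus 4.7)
The plan is to deduce the continuous subgaussian bound from the dyadic subgaussian estimate of Theorem~\ref{theo2}(a) via the averaging representation in Proposition~\ref{prop3.2}, combined with Jensen's inequality and Fubini's theorem. Recognizing that the integral on the right of Proposition~\ref{prop3.2} is exactly $\Theta_{2^{-n}}(f)(x)$, that identity rewrites, for each $1\le n\le N$, as
$$
\Theta_{2^{-n}}(f)(x) \;=\; (1+\alpha)\int_1^2\!\!\int_0^\rho \Gamma_n^{(\rho)}(f_s)(x+s)\,ds\,\frac{d\rho}{\rho^2} \;-\; (1+\alpha)\,A_n(f)(x),
$$
with $|A_n(f)(x)|\le c(\alpha)\|f\|_\alpha\le c(\alpha)$ uniformly in $n$ and $x$. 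Taking absolute values, then the supremum over $1\le n\le N$, and pulling the sup inside the $(\rho,s)$-average using $\sup_{n\le N}|\Gamma_n^{(\rho)}(f_s)(x+s)|=\Gamma_N^{*,(\rho)}(f_s)(x+s)$, one gets
$$
\Theta^*_{2^{-N}}(f)(x) \;\le\; c(\alpha)\int_1^2\!\!\int_0^\rho \Gamma_N^{*,(\rho)}(f_s)(x+s)\,ds\,\frac{d\rho}{\rho^2} \;+\; c(\alpha).
$$

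Since the averaging measure has finite total mass $\log 2$ on $\{1\le\rho\le 2,\;0\le s\le\rho\}$, Jensen's inequality for $u\mapsto e^{\lambda u}$ (applied to the corresponding probability measure) gives
$$
\exp\bigl(\lambda\,\Theta^*_{2^{-N}}(f)(x)\bigr) \;\le\; c(\alpha)\,e^{c(\alpha)\lambda}\int_1^2\!\!\int_0^\rho \exp\bigl(c(\alpha)\lambda\,\Gamma_N^{*,(\rho)}(f_s)(x+s)\bigr)\,ds\,\frac{d\rho}{\rho^2}.
$$
Integrating in $x\in[0,1]$ and applying Fubini, the $x$-integral (after the translation $y=x+s$) becomes the integral of $\exp\bigl(c(\alpha)\lambda\,\Gamma_N^{*,(\rho)}(f_s)(y)\bigr)$ over $[s,s+1]$. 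Because $\|f\|_\alpha\le 1$ and $\rho\ge 1$, the dyadic martingale $\{S_k^{(\rho)}(f_s)\}$ satisfies $\|S_k^{(\rho)}(f_s)\|_\infty \le 2^{k\beta}$, and $\Gamma_N^{*,(\rho)}(f_s)=\rho^\beta\,\Gamma_N^*(\{S_k^{(\rho)}(f_s)\})$ with $\rho^\beta\le 2$. So Theorem~\ref{theo2}(a), applied on each of the $O(1)$ translates of $[0,\rho]$ covering $[s,s+1]$, yields
$$
\int_s^{s+1}\exp\bigl(c(\alpha)\lambda\,\Gamma_N^{*,(\rho)}(f_s)(y)\bigr)\,dy \;\le\; c(\alpha)\exp\bigl(c(\alpha)\lambda^2 N\bigr),
$$
uniformly in $(\rho,s)$. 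Substituting this bound back and absorbing the prefactor $e^{c(\alpha)\lambda}$ into the Gaussian via $c\lambda\le \lambda^2+c^2/4$ gives the claim.

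The main obstacle is keeping the sup-inside-integral and Jensen steps compatible: because Proposition~\ref{prop3.2} is a \emph{signed} identity, one must ensure that the passage to $\Theta^*_{2^{-N}}$ can legitimately be traded for $\Gamma_N^{*,(\rho)}$ inside the $(\rho,s)$-average, and this works cleanly only because the remainder $A_n(f)$ is bounded in $L^\infty$ uniformly in $n$. A secondary technicality is that Theorem~\ref{theo2}(a) is phrased over the fundamental period $[0,\rho]$, whereas after the translation $y=x+s$ the natural domain is $[s,s+1]\subset[0,3]$; translation invariance of the grid $\mathcal{D}(\rho)$ (together with $f\in\Lambda_\alpha([-1,2])$, which is exactly what is needed to make $f_s$ and the associated martingale well defined on $[s,s+1]$) settles this at the cost of a multiplicative constant.
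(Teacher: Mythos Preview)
Your proof is correct and follows essentially the same route as the paper: use Proposition~\ref{prop3.2} to express $\Theta_{2^{-n}}(f)(x)$ as an $(\rho,s)$-average of $\Gamma_n^{(\rho)}(f_s)(x+s)$ plus a uniformly bounded remainder, pass to the maximal function, apply Jensen and Fubini, and invoke Theorem~\ref{theo2}(a). You are in fact slightly more careful than the paper about the normalization of the averaging measure, the factor $\rho^\beta$ linking $\Gamma_N^{(\rho)}$ to the $\Gamma_N$ of Theorem~\ref{theo2}, and the translation $y=x+s$ shifting the domain of integration; the only small point you leave implicit is that Theorem~\ref{theo2} assumes $S_0\equiv 0$, which the paper handles by noting $|S_0^{(\rho)}(f_s)|\le c_1$ and absorbing the resulting bounded correction.
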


\begin{proof}
Consider the set $A=\{ (\rho,s): 1\le \rho\le 2,\, 0\le s\le \rho
\}$ and the measure~$d\mu$ defined as
$$
\mu (E)= \int_{E \bigcap A } ds \frac{d\rho}{\rho^{2}} \, , E
\subset {\Bbb R}^2
$$
 For any
$k=1,2,\dotsc$, Proposition~\ref{prop3.2} gives that
$$
\Theta_{2^{-k}}(f)(x)= (1 + \alpha) \int_{A}\Gamma^{(\rho)}_{k}
(f_{s})(x+s) \,d\mu (\rho,s) +A_{k} (f) (x) \, .
$$
Moreover there exists a constant $C=C(\alpha)$ such that
$\sup\limits_{k,x} |A_{k}(x)| \le C$. Here is where the
normalization $\|f\|_{\alpha}\le 1$ is used. Hence if $k$ and $N$
are integers with $k\le N$ we deduce
$$
|\Theta_{2^{-k}}(f)(x)|\le (1 + \alpha) \int_{A} (
\Gamma^{(\rho)}_{N} )^{*} (f_{s}) (x+s) \,d\mu (\rho,s)+C.
$$
Here
$(\Gamma_{N}^{(\rho)})^*(f_{s})(x)=\sup\{|\Gamma_{k}^{(\rho)}(f_{s})(x)|:k\le
N\}$. Hence for any $N=1,2,\dotsc$, one has
$$
\Theta_{2^{-N}}^{*}(f)(x)\le (1+ \alpha) \int_{A}
(\Gamma^{(\rho)}_{N} )^* (f_{s}) (x+s) \,d\mu (\rho,s)+C.
$$
Now, Jensen's inequality and Fubini's Theorem give that
\begin{equation}\label{eq4.1}
\int^{1}_{0} \exp \left( \lambda \Theta_{2^{-N}}^{*}(f)(x) \right)
\,dx \le \exp \left( \lambda C \right) \int_{A}\int^{1}_{0} \exp
\left( \lambda (\alpha + 1) ( \Gamma_{N}^{(\rho)} )^* (f_{s}) (x+s)
\right) \,dx\,d\mu (\rho,s).
\end{equation}
Recall that $\Gamma_{N}^{(\rho)}(f_{s})$ is defined via the
formula~\eqref{eq3.1} from the martingale~$S_{k}^{(\rho)}(f_{s})$
which is given by $S_{k}^{(\rho)}(f_{s}) (x)=
(f_{s}(b)-f_{s}(a))/(b-a)$, where $x\in
I_{k}^{(\rho)}(x)=[a,b)\in\mathcal{D}(\rho)$. The normalization
$\|f\|_{\alpha}\le 1$ gives that there exists an absolute
constant~$c_{1}>0$ such that $|S_{0}^{(\rho)}(f_{s})| \le c_{1}$ for
any $(\rho,s)\in A$. Recall that if $\|f\|_{\alpha}\le 1$, the
martingale~$S_{k}^{(\rho)}$ satisfies $\|S_{k}^{(\rho)}\|_{\infty}
\le (2^{k} / \rho )^{1- \alpha}$. According to (a) of
Theorem~\ref{theo2}, there exists a constant~$c_{1}(\alpha)>0$ such
that
$$
\int^{1}_{0} \exp \left( \lambda (1+ \alpha) (\Gamma_{N}^{(\rho)}
)^* (f_{s})(x+s) \right) \,dx \le c_{1}(\alpha) \exp \left(
c_{1}(\alpha)(c_{1}\lambda+\lambda^{2}N) \right).
$$
The trivial estimate $2 \lambda\le \lambda^{2}+1$ shows that there
exists a constant~$c_{2}(\alpha)> c_{1}(\alpha)$ such that
$$
\int^{1}_{0} \exp \left( \lambda (1+ \alpha) (\Gamma_{N}^{(\rho)}
)^* (f_{s})(x+s) \right) \,dx \le c_{2}(\alpha)
e^{c_{2}(\alpha)\lambda^{2}N}.
$$
By \eqref{eq4.1} one deduces
$$
\int^{1}_{0} \exp \left( \lambda \Theta_{2^{-N}}^{*}(f)(x)\right)
\,dx \le c_{2}(\alpha) \exp \left( C \lambda \right) \exp \left(
c_{2}(\alpha)\lambda^{2}N \right).
$$
Again the trivial estimate $2 \lambda \le \lambda^{2}+1$ finishes
the proof.
\end{proof}

Now the subgaussian estimate follows easily.

\begin{corollary}\label{coro4.2}
Let $f\in\Lambda_{\alpha}([-1,2])$ with $\|f\|_{\alpha}\le 1$. Then
there exists a constant $c(\alpha)>0$ such that for any $N>0$ and
any $t>0$ one has
$$
|\{ x\in [0,1]: \Theta_{2^{-N}}^{*}(f)(x) > \sqrt{N} t\}| \le
c(\alpha) \exp \left(-t^{2}/c(\alpha) \right).
$$
\end{corollary}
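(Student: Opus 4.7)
The plan is to derive the tail estimate from Proposition~\ref{prop4.1} by a standard exponential Markov (Chernoff) argument, followed by optimization in the parameter $\lambda$. Nothing deep is needed here since the hard work — the subgaussian exponential moment bound — has already been done.

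Concretely, I would proceed as follows. Fix $t>0$ and $N\ge 1$. By Markov's inequality applied to the nonnegative random variable $\exp(\lambda\,\Theta^*_{2^{-N}}(f))$ on $[0,1]$, for every $\lambda>0$ one has
$$
\bigl|\{x\in[0,1] : \Theta^*_{2^{-N}}(f)(x)>\sqrt{N}\,t\}\bigr|
\le e^{-\lambda\sqrt{N}\,t}\int_0^1 \exp\!\bigl(\lambda\,\Theta^*_{2^{-N}}(f)(x)\bigr)\,dx.
$$
Invoking Proposition~\ref{prop4.1} with constant $c(\alpha)$, the right-hand side is bounded by
$$
c(\alpha)\,\exp\!\bigl(c(\alpha)\lambda^2 N - \lambda\sqrt{N}\,t\bigr).
$$

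Next I would optimize the exponent in $\lambda>0$. The quadratic $c(\alpha)\lambda^2 N - \lambda\sqrt{N}\,t$ is minimized at $\lambda_0 = t/(2c(\alpha)\sqrt{N})$, with minimum value $-t^2/(4c(\alpha))$. Substituting $\lambda_0$ yields
$$
\bigl|\{x\in[0,1] : \Theta^*_{2^{-N}}(f)(x)>\sqrt{N}\,t\}\bigr|
\le c(\alpha)\exp\!\bigl(-t^2/(4c(\alpha))\bigr),
$$
which is exactly the claim after relabelling the constant (replacing $c(\alpha)$ by $\max\{c(\alpha),4c(\alpha)\}$).

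I expect no real obstacle: the only subtlety is that $N$ and $t$ are coupled via the scale $\sqrt{N}\,t$, so one must make sure to choose $\lambda$ proportional to $t/\sqrt{N}$ in order to cancel the $N$-dependence and obtain a bound depending only on $t$. Once that scaling is observed, the rest is immediate from Proposition~\ref{prop4.1}. The estimate is valid uniformly in $N$, which is what will be needed in the next step to pass to the limit $\varepsilon\to 0^+$ in the proof of Theorem~\ref{theo1}.
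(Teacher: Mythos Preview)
Your proof is correct and follows essentially the same route as the paper: apply Chebyshev/Markov to $\exp(\lambda\,\Theta^*_{2^{-N}}(f))$, invoke Proposition~\ref{prop4.1}, and optimize at $\lambda = t/(2c(\alpha)\sqrt{N})$ to obtain the bound $c(\alpha)\exp(-t^2/4c(\alpha))$.
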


\begin{proof}
Let $E=\{x\in [0,1]: \Theta_{2^{-N}}^{*}(f)(x)>\sqrt{N}t\}$.
Previous Proposition~\ref{prop4.1} and Chebyshev inequality gives
that for any $\lambda>0$ one has $ \exp \left( \lambda
\sqrt{N}t\right) |E|\le c(\alpha) \exp \left( c(\alpha)\lambda^{2}N
\right)  $, that is,
$$|E| \!\le\! c(\alpha) \!\exp
(c(\alpha)\lambda^{2\!}N\!-\!\lambda \sqrt{\!N}t) \, .
$$
We take
$\lambda\!=\!t/2c(\alpha)\!\sqrt{\!N}$ and deduce $|E|\! \le\!
c(\alpha)\!\exp (-t^{2\!\!}/4c(\alpha))\!$ which finishes the proof.
\end{proof}

We can now prove Theorem~\ref{theo1}.

\begin{proof}[Proof of Theorem \ref{theo1}]
In the proof of part~(a) we can assume that $I$ is the unit interval
and $\|f\|_{\alpha}=1$. Given $0<\varepsilon < 1/2$, pick an
integer~$N$ such that $2^{-N-1}\le \varepsilon < 2^{-N}$. Since
$|\Theta_{\varepsilon}(f)(x)-\Theta_{2^{-N}}(f)(x)|\le 2$,
Corollary~\ref{coro4.2} gives that
$$
|\{x\in [0,1] : |\Theta_{\varepsilon} (f)(x)| > \sqrt{N} t\}| \le c(\alpha) e^{-t^{2}/c(\alpha)},
$$
for any $t>0$ such that $t\sqrt{N} >2$. Now (a) follows easily from
$$
\int^{1}_{0}|\Theta_{\varepsilon} (f)(x)|^{2}\,dx= 2
\int^{\infty}_{0} \lambda |\{ x\in [0,1]
:|\Theta_{\varepsilon}(f)(x)|>\lambda \}| \,d\lambda.
$$
The Law of the Iterated Logarithm of part~(b) follows from the subgaussian estimate
 of Corollary~\ref{coro4.2} via an standard Borel-Cantelli argument. Consider the set~$A$ of points $x\in [0,1]$ for which
$$
\Theta_{2^{-N}}^{*} (f)(x) > 2 c\sqrt{ N \log \log N}
$$
for infinitely many $N\ge 0$. Here $c=c_0 (\alpha)$ is a constant
which will be chosen later. Let $N_m=2^m$. If
$\Theta^*_{2^{-N}}(f)(x)>2c\sqrt{N\log\log N}$ and $N_{m-1}<N\le
N_m$ then
\[
\Theta^*_{2^{-N_m}}(f)(x)\ge
\Theta^*_{2^{-N}}(f)(x)>2c\sqrt{N\log\log N}\ge c\sqrt{N_m\log\log
N_m}.\]
Thus $A\subset \cap_k\cup_{m\ge k} A_{N_m}$ where
\[A_{N_m}=\{x: \Theta^*_{2^{-N_m}}f(x)>c\sqrt{N_m\log\log N_m}\}.\]
Now Corollary 4.2 with $t=c\sqrt{\log\log N_m}=c(\log
(m\log2))^{1/2}$ gives $|A_{N_m}|\le c(\alpha)(m\log
2)^{-c^2/c(\alpha)}$ and for $c^2>c(\alpha)$ the Borel-Cantelli
lemma gives $|A|=0$. Thus for almost every $x\in [0,1]$ one has
$$
\limsup_{N\to\infty} \frac{|\Theta_{2^{-N}}^{*}(f)(x)|} {\sqrt{N\log
\log N}} \le 2c
$$
and the proof is completed.
\end{proof}

\section{Sharpness}\label{section5}
In this section the sharpness of our results is discussed.

\subsection{Sharpness of Theorem 1}\label{subsec5.1}
Both parts~(a) and~(b) in Theorem~\ref{theo1} as well as
Proposition~\ref{prop4.1} and its Corollary~\ref{coro4.2}, are sharp
up to the value of the constants~$c(\alpha)$. Since
Theorem~\ref{theo1} follows from Corollary~\ref{coro4.2}, it is
sufficient to construct a
function~$f\in\Lambda_{\alpha}(\mathbb{R})$ for which
\begin{equation}\label{eq5.1}
\int^{1}_{0} |\Theta_{\varepsilon}(f)(x)|^{2}\,dx \ge c(\log (
1/\varepsilon )),\quad 0<\varepsilon <1/2
\end{equation}
and
\begin{equation}\label{eq5.2}
\limsup_{\varepsilon\to 0}
\frac{\Theta_{\varepsilon}(f)(x)}{\sqrt{\log ( 1/\varepsilon
)\log\log\log ( 1/\varepsilon )}} >c, \quad \text{a.e.\ $x\in
[0,1]$}
\end{equation}
for a certain constant $c=c(\alpha)>0$. Fix $0<\alpha<1$. As it is
usual in this kind of questions, the function~$f$ will be given by a
lacunary series. More concretely, consider
$$
f(x)=\sum^{\infty}_{j=0} 2^{-j\alpha} \sin (2 \pi 2^{j}x).
$$
Then,
\begin{equation*}
\begin{split}
\Theta_{2^{-N}}(f)(x)&= \int^{1}_{2^{-N}}
\frac{f(x+h)-f(x-h)}{h^{\alpha}}\frac{dh}{h}\\*[7pt]
&=2\sum^{\infty}_{j=0} 2^{-j\alpha}\left(
\int^{1}_{2^{-N}}\frac{\sin (2^{j} 2 \pi
h)}{h^{\alpha+1}}\,dh\right) \cos (2^{j} 2 \pi x)\\*[7pt]
&=2\sum^{\infty}_{j=0} c_{j,N}\cos (2^{j} 2 \pi x),
\end{split}
\end{equation*}
where
$$
c_{j,N}=\int^{2^{j}}_{2^{j-N}}\frac{\sin( 2 \pi
t)}{t^{\alpha+1}}\,dt.
$$
Integrating by parts one shows that there exists a constant~$c_{1}(\alpha)>0$ such that
$$
|c_{j,N}|\le c_{1}(\alpha) 2^{-(j-N)(\alpha+1)}, \quad j=1,2,\dotsc,
\quad N=1,2,\dotsc\, .
$$
Hence
\begin{equation}\label{eq5.3}
\sum_{j\ge N} |c_{j,N}| \le 2c_{1}(\alpha).
\end{equation}
On the other hand, using the estimate $|\sin t|\le t$, we have
\begin{equation}\label{eq5.4}
\sum^{N}_{j=0} \left| \int_{0}^{2^{j-N}}\frac{\sin 2 \pi
t}{t^{\alpha+1}}\,dt \right| \le\frac{2 \pi }{1-\alpha}
\sum^{N}_{j=0} 2^{(j-N)(1-\alpha)}\le c_{2}(\alpha).
\end{equation}
Using \eqref{eq5.3} and \eqref{eq5.4} one deduces that
\begin{equation}\label{eq5.5}
\Theta_{2^{-N}}(f)(x)=\sum^{N}_{j=0} b_{j}\cos (2^{j} 2 \pi
x)+E_{N}(x),
\end{equation}
where $|E_{N}(x)|\le c_{3}(\alpha)=2c_{1}(\alpha)+c_{2}(\alpha)$ for any $x\in\mathbb{R}$ and any $N=1,2,\dotsc$ and
$$
b_{j}=2\int^{2^{j}}_{0} \frac{\sin 2 \pi t}{t^{\alpha+1}}\,dt.
$$
Consider
$$A(\alpha)=\lim\limits_{j\to\infty} b_{j} = 2 \int _0^\infty \frac{\sin 2 \pi t}{t^{\alpha+1}}\,dt$$
and observe that $A(\alpha) > 0$. By orthogonality for $N$
sufficiently large one has
$$
\|\Theta_{2^{-N}}(f)\|^{2}_{L^{2}[0,1]} \ge \frac{1}{2}
A(\alpha)^{2}N
$$
which gives \eqref{eq5.1}.

A classical result by M.~Weiss (\cite{W}) gives that
$$
\limsup_{N\to\infty}\frac{\left|\sum\limits_{j=0}^{N}b_{j}\cos
(2^{j} 2 \pi x)\right|}{\sqrt{N\log\log N}}=A(\alpha).
$$
Thus, from \eqref{eq5.5} one deduces
$$
\limsup_{N\to\infty}\frac{|\Theta_{2^{-N}}(f)(x)|}{\sqrt{N\log\log
N}}=A(\alpha)
$$
which gives \eqref{eq5.2}.

\subsection{Cancellation}\label{subsec5.2}
Theorem~\ref{theo1} says that the uniform
estimate~$\|\Theta_{\varepsilon}(f)\|_{\infty}\le c(\log
1/\varepsilon)\|f\|_{\alpha}$, $0<\varepsilon <1/2$, can be
substantially improved at almost every point. This is due to certain
cancellations which occur in the integral defining
$\Theta_{\varepsilon}(f)(x)$. Actually there exist $f\in
\Lambda_{\alpha}(\mathbb{R})$ and $c=c(f)>0$ such that for any
$0<\varepsilon <1/2$ one has
\begin{equation}\label{eq5.6}
\int^{1}_{\varepsilon} \frac{|f(x+h)-f(x-h)|} {h^{\alpha}}
\frac{dh}{h} \ge c\log ( 1/\varepsilon )
\end{equation}
for almost every $x\in\mathbb{R}$. Let $b>1$ be a large positive integer to be fixed later. Consider
$$
f(x)=\sum^{\infty}_{j=0} b^{-j\alpha} \cos (b^{j}x), \quad x\in\mathbb{R}.
$$
Fix $k\ge 0$ and $h$ such that $b^{-k}/2\le h\le 2b^{-k}$. Observe that
$$
\frac{2}{h^{\alpha}} \sum_{j>k} b^{-j\alpha} \le c(\alpha)
b^{-\alpha}
$$
and
$$
\frac{1}{h^{\alpha}} \sum_{j<k} b^{-j\alpha} |\cos (b^{j}x+b^{j}h)-\cos (b^{j}x-b^{j}h)|\le c(\alpha) b^{\alpha-1}.
$$
On the other hand
$$
\cos (b^{k}x+b^{k}h)-\cos (b^{k}x-b^{k}h)= -2\sin (b^{k}x) \sin (b^{k}h).
$$
Hence
$$
\int^{2b^{-k}}_{b^{-k}/2} \frac{|f(x+h)-f(x-h)|}{h^{\alpha}} \frac{dh}{h}\ge c|\sin (b^{k}x)|-c(\alpha,b)
$$
where $c(\alpha,b)=c(\alpha) (b^{-\alpha}+b^{\alpha-1})$ and $c>0$.
Thus, if $b$ is taken sufficiently large so that $c(\alpha,b)<c/4$,
one has
$$
\int^{1}_{\varepsilon} \frac{|f(x+h)-f(x-h)|}{h^{\alpha}}
\frac{dh}{h}  > c t(\varepsilon,x)/4
$$
where $t(\varepsilon,x)$ is the  number of positive integers~$k$
such that $b^{-k}\ge 2\varepsilon$ which satisfy $|\sin (b^{k}x)|\ge
1/2$. The uniform distribution of $\{b^k x\}$ (see Corollary 4.3 of
\cite{KN}) gives that there exists a constant $c_1
>0$ such that $t(\varepsilon,x) \ge c_1 \ln (2\varepsilon)^{-1}/\ln
b$ almost every~$x \in \Bbb R$. So
 \eqref{eq5.6}~follows.

\section{Higher Dimensions}\label{section6}

Theorem~\ref{theo1} can be easily extended to higher dimensions. For
$0<\alpha<1$, let $\Lambda_{\alpha} (\mathbb{R}^{d})$ be the class
of functions~$f\colon \mathbb{R}^{d}\to\mathbb{R}$ for which there
exists a constant $c=c(f)>0$ such that $|f(x)-f(y)|\le
c\|x-y\|^{\alpha}$ for any $x,y\in\mathbb{R}^{d}$. The infimum of
the constants~$c>0$ verifying this estimate is denoted
by~$\|f\|_{\alpha}$. Lebesgue measure in $\mathbb{R}^{d}$ is denoted
by~$dm$. Next result is the higher dimensional analogue of
Theorem~\ref{theo1}

\begin{theo}\label{theo6.1}
Let $0<\alpha<1$ and $f\in\Lambda_{\alpha}(\mathbb{R}^{d})$. For
$0<\varepsilon<1/2$, consider
$$
\Theta_{\varepsilon}(f)(x)=\int_{\{\varepsilon \le\|h\|\le 1\}} \frac{f(x+h)-f(x-h)}{\|h\|^{\alpha}}�\frac{dm(h)}{\|h\|^{d}}.
$$
Then, there exists a constant~$c(\alpha , d)>0$ such that
\begin{enumerate}
\item[(a)] For any cube~$Q\subset \mathbb{R}^{d}$ with $m(Q)=1$, one has
$$
\int_{Q} |\Theta_{\varepsilon}(f)(x)|^{2}\le c(\alpha , d) (\log
1/\varepsilon) \|f\|^{2}_{\alpha}.
$$
\item[(b)] At almost every $x\in\mathbb{R}^{d}$, one has
$$
\limsup_{\varepsilon\to 0}
\frac{|\Theta_{\varepsilon}(f)(x)|}{\sqrt{\log ( 1/\varepsilon )
\log \log \log ( 1/\varepsilon )}} \le c(\alpha , d) \|f\|_{\alpha}.
$$
\end{enumerate}
\end{theo}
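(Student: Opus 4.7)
The plan is to reduce the $d$-dimensional statement to the one-dimensional Theorem~\ref{theo1} by slicing $\mathbb{R}^{d}$ in all radial directions. Using polar coordinates $h=r\theta$ with $r>0$, $\theta\in S^{d-1}$, so that $dm(h)=r^{d-1}\,dr\,d\sigma(\theta)$, one has
\[
\Theta_{\varepsilon}(f)(x)=\int_{S^{d-1}}\Theta^{\theta}_{\varepsilon}(f)(x)\,d\sigma(\theta),\qquad \Theta^{\theta}_{\varepsilon}(f)(x)=\int^{1}_{\varepsilon}\frac{f(x+r\theta)-f(x-r\theta)}{r^{\alpha}}\frac{dr}{r}.
\]
For fixed $\theta\in S^{d-1}$ and $x_{\perp}$ in the hyperplane $\theta^{\perp}$, the restriction $g_{x_{\perp},\theta}(t):=f(x_{\perp}+t\theta)$ lies in $\Lambda_{\alpha}(\mathbb{R})$ with $\|g_{x_{\perp},\theta}\|_{\alpha}\le\|f\|_{\alpha}$, and $\Theta^{\theta}_{\varepsilon}(f)(x_{\perp}+t\theta)$ coincides with the one-dimensional oscillation $\Theta_{\varepsilon}(g_{x_{\perp},\theta})(t)$ of Section~\ref{section1}.

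Introduce the maximal functions $\Theta^{*}_{\varepsilon}(f)(x)=\sup\{|\Theta_{\delta}(f)(x)|:1/2\ge\delta\ge\varepsilon\}$ and, analogously, $\Theta^{*,\theta}_{\varepsilon}$. Pulling the supremum inside the spherical integral gives
\[
\Theta^{*}_{\varepsilon}(f)(x)\le\int_{S^{d-1}}\Theta^{*,\theta}_{\varepsilon}(f)(x)\,d\sigma(\theta).
\]
For a fixed unit cube $Q$ and a fixed $\theta$, parametrize $x=x_{\perp}+t\theta$ and apply Proposition~\ref{prop4.1} to $g_{x_{\perp},\theta}/\|f\|_{\alpha}$ along each line of direction $\theta$, then integrate in $x_{\perp}$ over the projection of $Q$ onto $\theta^{\perp}$, whose $(d-1)$-dimensional measure is bounded by a purely dimensional constant. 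Covering the resulting interval in $t$ by $O_d(1)$ translates of unit intervals to match the scaling of Proposition~\ref{prop4.1}, one obtains, for $2^{-N-1}\le\varepsilon<2^{-N}$ and any $\lambda>0$,
\[
\int_{Q}\exp\!\left(\lambda\,\Theta^{*,\theta}_{2^{-N}}(f)(x)\right)dx\le c(\alpha,d)\exp\!\left(c(\alpha)\lambda^{2}N\|f\|^{2}_{\alpha}\right),
\]
uniformly in $\theta\in S^{d-1}$. Writing $M=\sigma(S^{d-1})$ and applying Jensen's inequality with respect to the probability measure $d\sigma/M$ transfers this bound from $\Theta^{*,\theta}_{2^{-N}}$ to $\Theta^{*}_{2^{-N}}$ itself, at the cost of multiplying the exponent by $M^{2}$; this is the $d$-dimensional analogue of Proposition~\ref{prop4.1}.

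From the exponential integrability, Chebyshev's inequality produces the subgaussian tail bound analogous to Corollary~\ref{coro4.2}, and parts~(a) and~(b) of Theorem~\ref{theo6.1} follow verbatim from the layer-cake identity and the Borel--Cantelli argument along $N_{m}=2^{m}$ used to derive Theorem~\ref{theo1}. No new martingale or averaging construction beyond Sections~\ref{section2}--\ref{section4} is required: the substantive step is the slicing-and-Jensen reduction, and the only bookkeeping is to verify that the Hölder norm is preserved under restriction to lines and that the dimensional factors coming from $\pi_{\theta}(Q)$ and the spherical measure are absorbed into $c(\alpha,d)$. I expect no serious obstacle beyond this constant-tracking.
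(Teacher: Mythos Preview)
Your proposal is correct and follows essentially the same route as the paper: both arguments write $\Theta_{\varepsilon}(f)$ in polar coordinates as a spherical average of directional one-dimensional oscillations, pull the supremum inside, apply Proposition~\ref{prop4.1} to each direction via Fubini (slicing $Q$ along lines parallel to $\theta$), and then combine the directional exponential bounds by Jensen's inequality over the sphere before repeating the Chebyshev and Borel--Cantelli steps from Section~\ref{section4}. Your normalization of $d\sigma$ to a probability measure and the remark about covering by $O_d(1)$ unit intervals are actually more careful than the paper's write-up, but the substance is identical.
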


\begin{proof}
For any $\xi\in\mathbb{R}^{d}$, $|\xi|=1$, consider
$$
\Theta_{\varepsilon,\xi}(f)(x)=\int^{1}_{\varepsilon} \frac{f(x+\rho
\xi)-f(x-\rho\xi)}{\rho^{\alpha}}\frac{d\rho}{\rho}, \quad
x\in\mathbb{R}^{d}.
$$
Let $d\sigma(\xi)$ be the surface measure in the
sphere~$\{\xi\in\mathbb{R}^{d}: |\xi|=1\}$. Then
$$
\Theta_{\varepsilon}(f)(x)=\int_{\{|\xi|=1\}}
\Theta_{\varepsilon,\xi} (f)(x)\,d\sigma (\xi), \quad
x\in\mathbb{R}^{d}.
$$
We will take $\varepsilon=2^{-N}$ and will write $\Theta_{N,\xi}$ and $\Theta_{N}$ instead of $\Theta_{2^{-N},\varepsilon}$ and $\Theta_{2^{-N}}$. Also $\Theta^{*}_{N,\varepsilon}$, $\Theta^{*}_{N}$ will denote the maximal functions defined as
\begin{align*}
\Theta^{*}_{N,\xi}(f)(x)&=\sup \{ |\Theta_{k,\xi}(f)(x)|: k\le N\},\\
\Theta^{*}_{N}(f)(x)&=\sup \{ |\Theta_{k}(f)(x)|: k\le N\}.
\end{align*}
Then
$$
\Theta^{*}_{N}(f)(x) \le \int_{\{|\xi|=1\}} \Theta^{*}_{N,\xi}(f)(x) \,d\sigma(\xi), \quad x\in \mathbb{R}^{d},\quad N=1,2\dotsc\,.
$$
To prove (a) we  can assume that $Q$ is the unit cube and
$\|f\|_{\alpha}\le 1$. Jensen's inequality and Fubini's Theorem give
$$
\int_{Q} \exp \left( \lambda \Theta_{N}^{*}(f)(x)\right) \,dm(x)\le
\int_{\{|\xi|=1\}} \int_{Q} \exp \left( \lambda
\Theta^{*}_{N,\xi}(f)(x) \right) \,dm(x)\,d\sigma (\xi).
$$
Fixed $\xi\in \mathbb{R}^{d}$ with $|\xi|=1$, the inner integral can be understood as a $(d-1)$ dimensional integral of a one dimensional one to which we can apply Proposition~\ref{prop4.1}. Hence fixed $\xi\in\mathbb{R}^{d}$, $|\xi|=1$, we obtain
$$
\int_{Q} \exp \left(\lambda \Theta^{*}_{N,\xi}(f)(x) \right)
\,dm(x)\le c(\alpha) \exp \left( c(\alpha)\lambda^{2}N \right),
$$
for any $\lambda>0$ and any $N=1,2,\dotsc$ . We deduce
$$
\int_{Q} \exp \left( \lambda \Theta^{*}_{N}(f)(x) \right) \,dm(x)\le
c(\alpha)|\sigma(\{|\xi|=1\})| e^{c(\alpha)\lambda^{2}N},
$$
Now arguing as in Corollary~\ref{coro4.2}, one deduces the subgaussian estimate
$$
m\{x\in Q: |\Theta^{*}_{N}(f)(x)|> \sqrt{N} t\} \le c(\alpha , d)
\exp \left( -t^{2}/c(\alpha, d) \right).
$$
Arguing as in the proof of Theorem~\ref{theo1} one finishes the proof.
\end{proof}


$\quad$

$\quad$

 Jos\'e Gonz\'alez Llorente and Artur Nicolau

Departament de Matem{\`a}tiques

Universitat Aut{\`o}noma de Barcelona

08193 Bellaterra

Spain

jgllorente@mat.uab.cat

artur@mat.uab.cat


\end{document}